\newtheorem{theorem}{Theorem}
\newtheorem{proposition}{Proposition}
\def\re{\mathbb{R}}
\def\Sp{\mathbb{S}}
\def\pd{\partial}
\def\ol{\overline}
\def\la{\lambda}
\def\({\left(}
\def\){\right)}
\def\pd{\partial}
\def\|{\Vert}
\begin{document}
\title[Exact solution structures]{Exact solution structures on some nonlocal overdetermined problems}

\author{Kazuki Sato}

\address{
Department of Mathematics, Osaka Metropolitan University \\
3-3-138, Sumiyoshi-ku, Sugimoto-cho, Osaka, Japan \\
}

\email{sf22817a@st.omu.ac.jp \\}

\author{Futoshi Takahashi}

\address{
Department of Mathematics, Osaka Metropolitan University \\
3-3-138, Sumiyoshi-ku, Sugimoto-cho, Osaka, Japan \\
}

\email{futoshi@omu.ac.jp \\}

\begin{abstract}
In this paper, we study the solution structures of Serrin-type overdetermined problems with Kirchhoff-type nonlocal terms.
We prove that the exact number of solutions is the same as those of some transcendental equations defined by the nonlocal terms. 
We also obtain the explicit form of solutions by using the unique solutions of the overdetermined problems without the nonlocal terms. 
\end{abstract}

\subjclass[2020]{Primary 34C23; Secondary 37G99.}

\keywords{Overdetermined problems. Kirchhoff-type nonlocal term.}
\date{\today}

\dedicatory{}

\maketitle

\section{Introduction}

Let $\Omega \subset \re^N (N \ge 1)$ be a bounded domain with $C^2$ boundary and $k \in \{ 1, 2, \cdots, N \}$.
In this paper, we consider the following fully nonlinear overdetermined problem with Kirchhoff-type nonlocal term:
\begin{equation}
\label{OD_HM}
	\begin{cases}
	&M\(\| u \|_{L^{p}(\Omega)}, \| \nabla u \|_{L^{q}(\Omega)} \) S_k(D^2 u) = \la \ \text{in} \ \Omega, \\
	&u = 0 \quad \text{on} \ \pd\Omega, \\
	&\frac{\pd u}{\pd \nu} = c > 0 \quad \text{on} \ \pd\Omega,
	\end{cases}
\end{equation}
where $0 < p, q \le \infty$, $\la > 0$ are given constants,
$c$ is an {\it unknown} positive constant, $\nu$ is the outer unit normal to $\pd\Omega$,
and $M(s, t)$ is a positive function in $(s, t) \in \re_+ \times \re_+$.
We do not assume any continuity for $M$.
For a real symmetric matrix $A$, let $S_k(A)$ denote the $k$-th elementary symmetric function of the eigenvalues of $A$ (counted with multiplicity):
\[
	S_k(A) = \sum_{1 \le i_1 < i_2 < \cdots < i_k \le N} \la_{i_1} \la_{i_2} \cdots \la_{i_k}.
\]
Thus $S_1(A) = \sum_{i=1}^N \la_i$ and $S_N(A) = \prod_{i=1}^N \la_i$.
Finally, let $D^2 u(x) = \( \frac{\pd^2 u}{\pd x_i \pd x_j}(x) \)_{1 \le i, j \le N}$ denote the Hessian matrix of a $C^2$-function $u$.
Then $S_k(D^2 u)$ is called the {\it $k$-Hessian operator}.
Note that $S_1(D^2 u) = \Delta u$ and $S_N(D^2 u) = {\rm det } (D^2 u)$.

In a paper \cite{BNST}, the authors consider the Serrin-type overdetermined problem for the $k$-Hessian operator:
\begin{equation}
\label{OD_H}
	\begin{cases}
	&S_k(D^2 u) = \binom{N}{k} \ \text{in} \ \Omega, \\
	&u = 0 \quad \text{on} \ \pd\Omega, \\
	&\frac{\pd u}{\pd \nu} = c > 0 \quad \text{on} \ \pd\Omega,
	\end{cases}
\end{equation}
where $c > 0$ is unknown, $\binom{N}{k} = \frac{N !}{k! (N-k)!}$ for $k \in \{1,2,\dots, N \}$.
They prove that if the problem \eqref{OD_H} admits a solution $u \in C^2(\ol{\Omega})$ for some $k \in \{1,2,\dots, N \}$,
then, up to a translation, $\Omega$ must be a ball, say $\Omega = B_R(x_0)$ for $R > 0$ and $x_0 \in \re^N$,
and the solution is of the form 
\begin{equation}
\label{U_R}
	u(x) = U_{R, x_0}(x) :=  \frac{|x-x_0|^2 - R^2}{2}
\end{equation}
with $c = R$. 
Note that the $k$-Hessian operator $S_k(D^2 u)$ is fully nonlinear and in general is not elliptic.
In spite of these difficulties, the authors in \cite{BNST} give a shorter alternative proof which does not exploit maximum principles directly 
and to extend the famous result by Serrin \cite{Serrin} (for $k=1$) in this setting.
Their method uses a Pohozaev-type identity by Tso \cite{Tso} and reminds us of an alternative proof by Weinberger \cite{Weinberger}.

Recently, the Serrin-type overdetermined problem with Kirchhof-type nonlocal term:
\[
	\begin{cases}
	&M\(\| u \|_{L^{p}(\Omega)}, \| \nabla u \|_{L^{q}(\Omega)} \) \Delta u = \la \ \text{in} \ \Omega, \\
	&u = 0 \quad \text{on} \ \pd\Omega, \\
	&\frac{\pd u}{\pd \nu} = c \quad \text{on} \ \pd\Omega,
	\end{cases}
\]
where $0 < p, q \le \infty$, $\la > 0$ are given constants, $c$ is an unknown constant, and $M: \re_+ \times \re_+ \to \re_+$ is a positive function,
has been considered in \cite{Sato-TF}, 
and the exact number of solutions according to the values of the bifurcation parameter $\la > 0$ is determined 
by a transcendental equation defined by the nonlocal term for a real unknown variable. 

In this paper, we extend the main result in \cite{Sato-TF} to the problem \eqref{OD_HM}. 
Original argument is coming from \cite{ACM}.
\begin{theorem}
\label{Theorem:H}
Let $M : \re_+ \times \re_+ \to \re_+$, $0 < p, q \le \infty$, $\la > 0$, and $k \in \{ 1, 2, \dots, N \}$.
Then if \eqref{OD_HM} admits a solution $u \in C^2(\ol{\Omega})$ for an unknown constant $c > 0$,
then $\Omega$ must be a ball and $u$ must be radially symmetric with respect to the center of the ball. 

Let $\Omega = B_R(x_0)$ for $R > 0$ and $x _0 \in \re^N$ and consider the system of equations
with respect to $(s, t) \in \re_+ \times \re_+$:
\begin{equation}
\label{System}
	\begin{cases}
	&s = \left\{ \frac{M(s, t)}{\la} \binom{N}{k} \right\}^{-1/k} \| U_{R, x_0} \|_{L^{p}(B_R(x_0))} \\
	&t = \left\{ \frac{M(s, t)}{\la} \binom{N}{k} \right\}^{-1/k} \| \nabla U_{R, x_0} \|_{L^{q}(B_R(x_0))}
	\end{cases}
\end{equation}
where $U_{R, x_0}$ defined in \eqref{U_R} is the unique solution of \eqref{OD_H} for $\Omega = B_R(x_0)$.
Then for $\la > 0$, the problem \eqref{OD_HM} for $\Omega = B_R(x_0)$ has the same number of solutions of the system of equations \eqref{System}.
Also define
\begin{equation}
\label{g(s)}
	g(s) =  s^k M\(s, \frac{\| \nabla U_{R, x_0} \|_{L^{q}(B_R(x_0))}}{\| U_{R, x_0} \|_{L^{p}(B_R(x_0)}} s \) \binom{N}{k}
\end{equation}
for $s > 0$.
Then the number of solutions of \eqref{System} is the same as the number of solutions to the equation
\begin{equation}
\label{Single}
	g(s) = \la \| U_{R, x_0} \|_{L^{p}(B_R(x_0))}^k
\end{equation}
with respect to $s > 0$.
Moreover, any solution $u_{\la}$ of \eqref{OD_HM} has the form 
\[
	u_{\la}(x) = s_* \frac{U_{R,x_0}(x)}{\| U_{R, x_0} \|_{L^{p}(B_R(x_0))}}
\]
where $s_*$ is any solution of \eqref{Single}.
\end{theorem}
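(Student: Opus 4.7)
The plan is to reduce \eqref{OD_HM} to the purely local problem \eqref{OD_H} by a freezing-plus-rescaling argument. Given any solution $u \in C^2(\ol{\Omega})$ of \eqref{OD_HM}, the number $m := M(\|u\|_{L^p(\Omega)}, \|\nabla u\|_{L^q(\Omega)}) > 0$ is fixed, so $u$ solves the \emph{local} equation $S_k(D^2 u) = \la/m$ together with the Serrin boundary conditions. I then rescale by setting $v := \al u$ with $\al := \bigl(m\binom{N}{k}/\la\bigr)^{1/k} > 0$; by the homogeneity $S_k(D^2 v) = \al^k S_k(D^2 u)$ the function $v$ solves exactly problem \eqref{OD_H}, with overdetermined Neumann datum $\al c > 0$. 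The symmetry theorem of \cite{BNST} then forces $\Omega = B_R(x_0)$ for some $R > 0$ and $x_0 \in \re^N$, and $v = U_{R,x_0}$, which yields the radial symmetry conclusion together with the representation $u = U_{R,x_0}/\al$.

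Next, I would translate this representation into the system \eqref{System}. Setting $s := \|u\|_{L^p(\Omega)}$ and $t := \|\nabla u\|_{L^q(\Omega)}$, the identities $s = \|U_{R,x_0}\|_{L^p}/\al$, $t = \|\nabla U_{R,x_0}\|_{L^q}/\al$ and the definition of $\al$ combine to give precisely \eqref{System}. Conversely, if $(s,t)$ solves \eqref{System}, the candidate $u_* := (s/\|U_{R,x_0}\|_{L^p}) U_{R,x_0}$ is a scalar multiple of $U_{R,x_0}$ for which $\|u_*\|_{L^p} = s$ and $\|\nabla u_*\|_{L^q} = t$ by construction, so $M$ in \eqref{OD_HM} is evaluated at exactly the pair $(s,t)$ used to define $u_*$, and a direct calculation then confirms that $u_*$ satisfies both the PDE and the overdetermined boundary condition. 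Because the maps $u \mapsto (s,t)$ and $(s,t) \mapsto u_*$ are inverse bijections, the two problems have the same number of solutions, and the explicit formula for $u_\la$ in the statement is read off immediately.

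The reduction of \eqref{System} to the scalar equation \eqref{Single} is then purely algebraic. Dividing the two equations of \eqref{System} cancels the common nonlocal factor and yields $t = \bigl(\|\nabla U_{R,x_0}\|_{L^q}/\|U_{R,x_0}\|_{L^p}\bigr)\,s$; substituting this expression for $t$ back into the first equation of \eqref{System} and raising to the $k$-th power rewrites that equation as $g(s) = \la \|U_{R,x_0}\|_{L^p}^k$, with $g$ as in \eqref{g(s)}. Conversely, any $s > 0$ solving \eqref{Single} together with the $t$ defined above gives a solution of \eqref{System}, so the two counts agree.

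The main obstacle I expect is conceptual rather than technical: once the freezing-and-rescaling trick is justified, the statement is essentially a reformulation of the local result of \cite{BNST}. The only point requiring care is that $M$ is merely assumed positive, without any continuity or monotonicity. However, in both directions of the correspondence $M$ is only ever evaluated at a single, explicit pair of numbers -- either the norms of a prescribed solution, or the pair $(s,t)$ chosen to solve \eqref{System} -- so no regularity of $M$ enters the argument, and the theorem can be proved at the stated level of generality.
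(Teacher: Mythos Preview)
Your proposal is correct and follows essentially the same route as the paper: freeze the nonlocal coefficient, rescale by $\alpha = \bigl(M(\|u\|_{L^p},\|\nabla u\|_{L^q})\binom{N}{k}/\lambda\bigr)^{1/k}$ (the paper calls this $\gamma$), invoke \cite{BNST} to identify $\Omega$ and $v=U_{R,x_0}$, then set up the two-way correspondence between solutions of \eqref{OD_HM} and of \eqref{System}, and finally collapse \eqref{System} to \eqref{Single} by eliminating $t$. Your observation that the argument uses $M$ only pointwise, so no regularity is needed, is also exactly the paper's standpoint.
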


\vspace{1em}
In the second part, we study an exterior overdetermined problem with Kirchhoff-type nonlocal term as described below:
Let $D \subset \re^N$ $(N \ge 2)$ be a bounded $C^2$-domain containing the origin as an interior point.
We consider
\begin{equation}
\label{OD_EM}
	\begin{cases}
	&M\(\| u \|_{L^{p}(\Omega)}, \| \nabla u \|_{L^{q}(\Omega)} \) \Delta u(x) = \la |x|^{-N-2}, \quad x \in \Omega, \\
	&u(x) = 0, \quad x \in \pd \Omega, \\
	&|\nabla u(x)| = c |x|^{-N} > 0, \quad x \in \pd \Omega, \\
	&|u(x)| = o(1) \quad (|x| \to \infty), \quad (\text{if} \ N \ge 3), \\
	&|u(x)| = O(1) \quad (|x| \to \infty), \quad (\text{if} \ N = 2),
	\end{cases}
\end{equation}
where $\Omega = \re^N \setminus \ol{D}$ is an exterior domain,
$0 < p, q \le \infty$, $\la > 0$ are given constants, $c$ is an unknown positive constant, 
and $M(s, t)$ is a positive function in $(s, t) \in \re_+ \times \re_+$.

Let $B$ be the closed unit ball in $\re^N$ for $N \ge 2$ and define
\begin{equation}
\label{U}
	U(x) = \frac{1}{2} \( |x|^{-N} - |x|^{2-N}\) \quad \text{for} \ |x| > 1.
\end{equation}
Then a direct computation shows that $U$ is a solution of
\[
	\begin{cases}
	&\Delta U = N |x|^{-N-2} \ \text{in} \ B^c = \re^N \setminus B, \\
	&U = 0 \quad \text{on} \ \pd B^c, \\
	&|\nabla U| = 1 \quad \text{on} \ \pd B^c, \\
	&|U(x)| \to 0 \quad (|x| \to \infty), \quad (\text{if} \ N \ge 3), \\
	&|U(x)| \to \frac{1}{2} \quad (|x| \to \infty), \quad (\text{if} \ N = 2).
	\end{cases}
\]
Also note that $U \in L^p(B^c)$ if and only if $\frac{N}{N-2} < p \le \infty$ if $N \ge 3$, $p = \infty$ if $N = 2$, 
and $|\nabla U| \in L^q(B^c)$ if and only if $\frac{N}{N-1} < q \le \infty$ if $N \ge 3$, $\frac{2}{3} < q \le \infty$ if $N = 2$.
In this case, we compute 
\begin{align*}
	&\| U \|_{L^p(B^c)}^p = (\frac{1}{2})^{p+1} |\Sp^{N-1}| B\(\tfrac{p(N-2)-N}{2}, p+1\), \quad (\frac{N}{N-2} < p < \infty, \ N \ge 3), \\
	&\| U \|_{L^\infty(B^c)} = \(\frac{1}{N-2}\)\(\frac{N}{N-1}\)^{-N/2}, \quad (N \ge 3), \\
	&\| U \|_{L^\infty(B^c)} = \frac{1}{2}, \quad (N = 2), \\
	&\| \nabla U \|_{L^q(B^c)}^q = (\frac{1}{2})^{q+1} |\Sp^{N-1}| N^q \(\frac{N-2}{N}\)^{\tfrac{q(N+1)-N}{2}} \times \\
	&\(B \(\tfrac{q(N-1)-N}{2}, q+1\) + B_{\tfrac{2}{N}}\( q+1, \tfrac{-(N+1)q + N}{2} \) \), \quad (\frac{N}{N-1} < q < \infty, \ N \ge 3), \\
	&\| \nabla U \|_{L^q(B^c)}^q = \frac{2\pi}{3q-2} \quad (\frac{2}{3} < q < \infty, \ N = 2), \\
	&\| \nabla U \|_{L^\infty(B^c)} = \(\frac{N}{N-1}\)\(\frac{N(N+1)}{(N-1)(N-2)}\)^{-(N+1)/2}, \quad (N \ge 3), \\
	&\| \nabla U \|_{L^\infty(B^c)} = 1, \quad (N = 2),
\end{align*}
where $B(x, y) = \int_0^1 t^{x-1} (1-t)^{y-1} dt$ denotes the Beta function, and 
$B_z(x, y) = \int_0^z t^{x-1} (1-t)^{y-1} dt$ denotes the incomplete Beta function for $z \in (0,1)$.

Assume $\frac{N}{N-2} < p \le \infty$ if $N \ge 3$, $p = \infty$ if $N = 2$, and $\frac{N}{N-1} < q \le \infty$ if $N \ge 3$, $\frac{2}{3} < q \le \infty$ if $N=2$. 
Let us consider the system of equations 
\begin{equation}
\label{System_E}
	\begin{cases}
	&s = \left\{ \frac{M(s, t)}{\la} N \right\}^{-1} \| U \|_{L^{p}(B^c)} \\
	&t = \left\{ \frac{M(s, t)}{\la} N \right\}^{-1} \| \nabla U \|_{L^{q}(B^c)}
	\end{cases}
\end{equation}
with respect to $(s, t) \in \re_+ \times \re_+$, where $U$ is in \eqref{U}.
Then we have the following:

\begin{theorem}
\label{Theorem:E}
Let $N \ge 2$, $M : \re_+ \times \re_+ \to \re_+$, and
\begin{align*}
	&\frac{N}{N-2} < p \le \infty \quad \text{and} \quad \frac{N}{N-1} < q \le \infty, \quad \text{if} \ N \ge 3, \\ 
	&p = \infty \quad \text{and} \quad \frac{2}{3} < q \le \infty,  \quad \text{if} \ N = 2.
\end{align*}
Assume that \eqref{OD_EM} admits a solution $u \in C^2(\ol{\Omega})$ for an unknown constant $c > 0$,
where $\Omega = \re^N \setminus \ol{D}$ is an exterior domain.
Then $D$ must be a ball and $u$ must be radially symmetric with respect to the center of the ball. 

Assume, without loss of generality, that $\ol{D} = B$.
Then for $\la > 0$, the problem \eqref{OD_EM} for $\Omega = B^c$ has the same number of solutions of the system of equations \eqref{System_E}.
Also define
\[
	g(s) =  s M\(s, \frac{\| \nabla U \|_{L^{q}(B^c)}}{\| U \|_{L^{p}(B^c)}} s \) N
\]
for $s > 0$.
Then the number of solutions of \eqref{System_E} is the same as the number of solutions to the equation
\begin{equation}
\label{Single_O}
	g(s) = \la \| U \|_{L^{p}(B^c)}
\end{equation}
with respect to $s > 0$.
Moreover, any solution $u_{\la}$ of \eqref{OD_EM} has the form 
\[
	u_{\la}(x) = s_* \frac{U(x)}{\| U \|_{L^{p}(B^c)}}
\]
where $s_*$ is any solution of \eqref{Single_O}.
\end{theorem}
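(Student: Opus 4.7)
The proof will mirror that of Theorem~\ref{Theorem:H} by freezing the nonlocal coefficient to reduce \eqref{OD_EM} to a local exterior Serrin-type problem whose symmetric solutions are known, and then translating the resulting proportionality $u \propto U$ into the equations \eqref{System_E} and \eqref{Single_O}.

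First, given any $C^2(\ol{\Omega})$ solution $u$ of \eqref{OD_EM}, set $s_0 := \| u \|_{L^p(\Omega)}$ and $t_0 := \| \nabla u \|_{L^q(\Omega)}$, so that $m_0 := M(s_0, t_0)$ is a fixed positive constant. Then $u$ solves the local exterior overdetermined problem
\[
    \Delta u = \frac{\la}{m_0} |x|^{-N-2} \ \text{in} \ \Omega, \quad u = 0 \ \text{on} \ \pd\Omega, \quad |\nabla u| = c |x|^{-N} \ \text{on} \ \pd\Omega,
\]
with the same decay at infinity as in \eqref{OD_EM}. Because the weight $|x|^{-N-2}$ is radial about the origin and the origin lies inside $D$, the symmetry result for exterior Serrin-type overdetermined problems with this class of weights forces $D$ to be a ball centered at the origin and $u$ to be radial. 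After rescaling we may take $\ol{D} = B$.

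Second, with $\ol{D} = B$, I would compare $u$ to the explicit reference profile $U$ of \eqref{U}. The radial function $\frac{\la}{m_0 N} U$ satisfies $\Delta w = (\la/m_0) |x|^{-N-2}$ in $B^c$ with $w = 0$ on $\pd B^c$ and the prescribed decay; by ODE uniqueness of the radial profile this determines the solution uniquely, so
\[
    u(x) = \al \, U(x), \qquad \al := \frac{\la}{M(s_0, t_0) \, N}.
\]
Taking norms gives $s_0 = \al \| U \|_{L^p(B^c)}$ and $t_0 = \al \| \nabla U \|_{L^q(B^c)}$, so $(s_0, t_0)$ solves \eqref{System_E}. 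Conversely, for any solution $(s_*, t_*)$ of \eqref{System_E}, the candidate $u_*(x) := \frac{\la}{M(s_*,t_*) N} U(x)$ has $\| u_* \|_{L^p(B^c)} = s_*$ and $\| \nabla u_* \|_{L^q(B^c)} = t_*$, whence $M(\| u_* \|_{L^p(B^c)}, \| \nabla u_* \|_{L^q(B^c)}) = M(s_*, t_*)$ and $u_*$ genuinely solves \eqref{OD_EM}. This yields a bijection between solutions of \eqref{OD_EM} and of \eqref{System_E}. Dividing the two equations in \eqref{System_E} forces every solution to satisfy $t = \frac{\| \nabla U \|_{L^q(B^c)}}{\| U \|_{L^p(B^c)}} s$, and substituting back into the first equation yields exactly $g(s) = \la \| U \|_{L^p(B^c)}$, i.e.\ \eqref{Single_O}. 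The stated formula $u_\la = s_* U / \| U \|_{L^p(B^c)}$ then follows from $u_\la = \al U$.

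The main obstacle is the symmetry step for the frozen-coefficient exterior problem. In contrast to the bounded case \eqref{OD_H}, handled in \cite{BNST} by a Pohozaev--Tso identity on a compact domain, the exterior setting requires controlling the $|x|^{-N-2}$ weight, managing the non-compactness of $\Omega$, and using the delicate decay condition at infinity (which differs between $N \ge 3$ and $N = 2$) in order both to pin down the origin as the center of symmetry and to ensure convergence of the integral identities used.
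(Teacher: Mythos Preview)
Your reduction and bijection argument is correct and matches the paper exactly: freeze the nonlocal coefficient, reduce to the local exterior overdetermined problem, use its rigidity/uniqueness to get $u=\alpha U$, and then set up the one-to-one correspondence with \eqref{System_E} and collapse it to \eqref{Single_O}. This is precisely how the paper proceeds, by saying the proof of Theorem~\ref{Theorem:E} is identical to that of Theorem~\ref{Theorem:H} once the local exterior problem is understood.

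The one place your proposal diverges is the ``main obstacle'' you isolate at the end. You speculate that the exterior symmetry step would require a Pohozaev--Tso type identity adapted to the unbounded domain, with careful control of the $|x|^{-N-2}$ weight and the decay at infinity. That is not how the paper does it. Instead, the paper proves a separate Proposition~\ref{Prop:E}: it applies the Kelvin transform $w(y)=|y|^{2-N}u(y/|y|^2)$ to the frozen-coefficient problem. The specific weight $|x|^{-N-2}$ is exactly what makes $\Delta_y w$ constant on the bounded inverted domain $\Omega^*$, and a short computation shows $|\nabla_y w|$ is constant on $\partial\Omega^*$; the decay hypothesis on $u$ (the $o(1)$ for $N\ge 3$, $O(1)$ for $N=2$) is used only to show that the origin is a removable singularity for $w$. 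One is then back to Serrin's classical bounded problem \cite{Serrin}, which forces $\Omega^*$ to be a ball and pins down $w$, hence $u$, uniquely. So the obstacle dissolves by change of variables rather than by new integral identities on $\Omega$.
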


\section{Proof of Theorem \ref{Theorem:H}.}

In this section, we prove Theorem \ref{Theorem:H}.

\begin{proof}
First assume that there exists a solution $u \in C^2(\ol{\Omega})$ of \eqref{OD_HM} and put 
\[
	v = \gamma u
\] 
where $\gamma > 0$ is chosen so that 
\[
	\gamma = \( \frac{M(\| u \|_{L^{p}(\Omega)}, \| \nabla u \|_{L^{q}(\Omega)})}{\la} \binom{N}{k} \)^{1/k}.
\]
Note that the $k$-Hessian operator is homogeneous of degree $k$:
Then
\begin{align*}
	S_k(D^2 v) = S_k (\gamma D^2 u) &= \gamma^k S_k(D^2 u) \\
	&\overset{\eqref{OD_HM}}{=} \gamma^k \dfrac{\la}{M(\| u \|_{L^p(\Omega)}, \| \nabla u \|_{L^q(\Omega)})} = \binom{N}{k}
\end{align*}
in $\Omega$.
Also we see $v = 0$ on $\pd\Omega$ and $\frac{\pd v}{\pd \nu} = \gamma c = const.$ on $\pd\Omega$.
Thus by the result of \cite{BNST}, we see that $\Omega$ must be a ball, say $\Omega = B_R(x_0)$ for some $R > 0$ and $x _0 \in \re^N$ and 
$v \equiv U_{R,x_0}(x)$. 
This implies that 
\begin{equation}
\label{u_form}
	u(x) = \gamma^{-1} U_{R,x_0}(x), \quad \nabla u(x) = \gamma^{-1} \nabla U_{R,x_0}(x).
\end{equation}
Define
\begin{equation}
\label{st}
	s = \| u \|_{L^{p}(B_R(x_0))} \quad \text{and} \quad t = \| \nabla u \|_{L^{q}(B_R(x_0))}.
\end{equation}
Then by \eqref{u_form}, we have
\[
	\begin{cases}
	&s = \gamma^{-1} \| U_{R,x_0} \|_{L^{p}(B_R(x_0))}, \\
	&t = \gamma^{-1} \| \nabla U_{R,x_0} \|_{L^{q}(B_R(x_0))},
	\end{cases}
\]
which is equivalent to \eqref{System}.
This shows that 
\[
	(s, t) = (\| u \|_{L^{p}(B_R(x_0))}, \| \nabla u \|_{L^{q}(B_R(x_0))})
\]
is a solution to \eqref{System} and thus
\[
	\sharp \{ u : \text{solutions of \eqref{OD_HM} for $\Omega = B_R(x_0)$} \} \le \sharp \{ (s, t) \in (\re_+)^2 : \text{solutions of \eqref{System}} \},
\]
where $\sharp A$ denotes the cardinality of the set $A$.

On the other hand, let $\Omega = B_R(x_0)$ for some $R > 0$ and $x_0 \in \re^N$ 
and let $(s, t) \in (\re_+)^2$ be any solution to \eqref{System}.
Note that by \eqref{System}, we see
\begin{align}
\label{System2}
	&\frac{s}{\| U_{R, x_0} \|_{L^{p}(B_R(x_0))}} = \frac{t}{\| \nabla U_{R,x_0} \|_{L^{q}(B_R(x_0))}} = \( \frac{M(s, t)}{\la} \binom{N}{k} \)^{-1/k}.
\end{align}
Thus if we define 
\begin{equation}
\label{u_def}
	u(x) = s \frac{U_{R, x_0}(x)}{\| U_{R, x_0} \|_{L^{p}(B_R(x_0))}}\(= t \frac{U_{R, x_0}(x)}{\| \nabla U_{R, x_0} \|_{L^{q}(B_R(x_0))}} \),
\end{equation}
then we have $u = 0$, $\frac{\pd u}{\pd \nu} = const.$ on $\pd B_R(x_0)$ and \eqref{st} holds by \eqref{u_def}.
Moreover, we have
\begin{align*}
	&M\(\| u \|_{L^{p}(B_R(x_0))}, \| \nabla u \|_{L^{q}(B_R(x_0))}\) S_k(D^2 u) \overset{\eqref{st}}{=} M(s, t) S_k(D^2 u) \\
	&\overset{\eqref{u_def}}{=} M(s, t) \( \frac{s}{\| U_{R, x_0} \|_{L^{p}(B_R(x_0))}} \)^k \underbrace{S_k(D^2 U_{R, x_0})}_{=\binom{N}{k}} \\
	&\overset{\eqref{System2}}{=} M(s, t) \( \frac{M(s,t)}{\la} \binom{N}{k} \)^{-1} \binom{N}{k} = \la
\end{align*}
on $B_R(x_0)$.
This shows that
\[
	\sharp \{ u : \text{solutions of \eqref{OD_HM} for $\Omega = B_R(x_0)$} \} \ge \sharp \{ (s, t) \in (\re_+)^2 : \text{solutions of \eqref{System}} \}.
\]
Thus the number of solutions of \eqref{OD_HM} for $\Omega = B_R(x_0)$ and that of \eqref{System} are the same.

Also by \eqref{System2}, we can rewrite the system of equations \eqref{System} into a single equation for $s$: 
\[
	s = \( \dfrac{M\(s, \dfrac{\| \nabla U_{R, x_0} \|_{L^{q}(B_R(x_0))}}{\| U_{R, x_0} \|_{L^{p}(B_R(x_0))}}\) s}{\la} \binom{N}{k} \)^{-1/k} \| U_{R, x_0} \|_{L^p(B_R(x_0)}, 
\]
which is equivalent to \eqref{Single} with $g(s)$ in \eqref{g(s)}.
Thus the number of solutions of \eqref{System} for $\Omega = B_R(x_0)$ and that of \eqref{Single} are also the same. 
\end{proof}

\section{Exterior overdetermined problem.}

Let $D \subset \re^N$, $N \ge 2$ be a bounded $C^2$-domain containing the origin as an interior point,
and put $\Omega = \re^N \setminus \ol{D}$.
First, we consider an exterior overdetermined problem without Kirchhoff term: 
\begin{equation}
\label{OD_E}
	\begin{cases}
	&\Delta u = N |x|^{-N-2} \ \text{in} \ \Omega, \\
	&u = 0 \quad \text{on} \ \pd\Omega, \\
	&|\nabla u| = c|x|^{-N} \quad \text{on} \ \pd\Omega, \\
	&|u(x)| = o(1) \quad (|x| \to \infty), \quad (\text{if} \ N \ge 3), \\
	&|u(x)| = O(1) \quad (|x| \to \infty), \quad (\text{if} \ N = 2),
	\end{cases}
\end{equation}
where $c$ is an unknown positive constant. 
Direct computation shows that $U(x)$ in \eqref{U} is the exact solution of \eqref{OD_E} for $\Omega = B^c$ with $c = 1$.
On the uniqueness, we have the following proposition.

\begin{proposition}
\label{Prop:E}
Let $N \ge 2$ and $\Omega = \re^N \setminus \ol{D}$ be an exterior domain.
If the problem \eqref{OD_E} admits a solution $u \in C^2(\ol{\Omega})$, then $D$ must be a ball and $u$ must be radially symmetric with respect to the center of the ball.
Let $\Omega = B^c = \re^N \setminus B$. Then $U(x)$ in \eqref{U} is the unique solution of \eqref{OD_E} and $c$ must satisfy $c = 1$.
\end{proposition}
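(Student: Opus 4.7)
My plan is to reduce \eqref{OD_E} to the classical interior Serrin overdetermined problem by means of the Kelvin inversion $\psi(y) := y/|y|^2$ and then invoke the $k=1$ case of the result of \cite{BNST}. The weights $|x|^{-N-2}$ in the equation and $|x|^{-N}$ in the Neumann datum are tuned precisely so that after inversion the equation becomes $\Delta v = N$ and the boundary condition becomes $|\nabla v|=c$, which is the setting of Serrin.

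Concretely, I would set
\[
v(y) := |y|^{2-N}\, u\bigl(\psi(y)\bigr) \quad (N \ge 3), \qquad v(y) := u\bigl(\psi(y)\bigr) \quad (N = 2),
\]
on the bounded set $\hat D := \psi(\Omega) \cup \{0\}$, whose boundary is $\psi(\pd D)$. Since $|\psi(y)| = 1/|y|$ and $D\psi(y)$ equals $|y|^{-2}$ times an orthogonal reflection, the standard identity $\Delta_y\bigl(|y|^{2-N} f(\psi(y))\bigr) = |y|^{-N-2}(\Delta f)(\psi(y))$ together with $\Delta u = N|x|^{-N-2}$ gives $\Delta v = N$ in $\hat D \setminus \{0\}$; in dimension two the direct computation $\Delta_y(u \circ \psi) = |y|^{-4}\Delta u$ yields the same conclusion. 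The identity $u = 0$ on $\pd D$ transports to $v = 0$ on $\pd \hat D$, and at a boundary point (where $u$ vanishes) the chain rule produces
\[
|\nabla_y v(y)| = |y|^{2-N} \cdot |y|^{-2} \cdot |\nabla_x u(x)| = |y|^{-N} \cdot c |x|^{-N} = c,
\]
and likewise in dimension two.

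Next I would remove the singularity of $v$ at the origin. The decay hypotheses translate into $|v(y)| = o(|y|^{2-N})$ as $y \to 0$ when $N \ge 3$, and $|v(y)| = O(1)$ when $N = 2$. Because $\Delta v = N$ is bounded near the origin, writing $v = \Phi + w$ with $\Phi$ any smooth particular solution of $\Delta \Phi = N$ in a small ball reduces the task to extending a harmonic function $w$ having sub-fundamental-solution growth at an isolated singularity, which is the classical removable-singularity theorem. Thus $v \in C^2(\ol{\hat D})$ solves the interior Serrin problem
\[
\Delta v = N\ \text{in}\ \hat D, \qquad v = 0,\ |\nabla v| = c\ \text{on}\ \pd \hat D,
\]
namely the $k=1$ instance of \eqref{OD_H}. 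The result recalled from \cite{BNST} forces $\hat D = B_{\hat r}(\hat x_0)$ for some $\hat x_0 \in \re^N$ and $\hat r > 0$, with $v(y) = \tfrac{1}{2}(|y - \hat x_0|^2 - \hat r^2)$ and $c = \hat r$. Since Kelvin inversion sends any sphere not through the origin to a sphere, $\pd D = \psi(\pd \hat D)$ is a sphere and $D$ is a ball.

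For the final uniqueness assertion, in the special case $\ol D = B$ the inversion fixes $\pd B$, so $\hat D = B$, which forces $\hat x_0 = 0$ and $\hat r = 1$. The preceding formula becomes $v(y) = \tfrac{1}{2}(|y|^2 - 1)$, and inverting the Kelvin transform yields
\[
u(x) = |x|^{2-N} v(x/|x|^2) = \tfrac{1}{2}\bigl(|x|^{-N} - |x|^{2-N}\bigr) = U(x),
\]
together with $c = \hat r = 1$. The main delicate point is the removable-singularity step above: without the prescribed decay of $u$ at infinity one cannot upgrade $v$ to a $C^2$ solution on all of $\hat D$ and thus cannot invoke the classical interior theorem. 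The different growth conditions imposed for $N \ge 3$ and for $N = 2$ correspond exactly to the respective fundamental-solution orders of the Laplacian.
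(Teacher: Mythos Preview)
Your approach is essentially identical to the paper's: both reduce \eqref{OD_E} to the interior Serrin problem via the Kelvin transform $w(y)=|y|^{2-N}u(y/|y|^2)$, verify that $\Delta w=N$, $w=0$ and $|\nabla w|=c$ on $\pd\Omega^*$, use the decay of $u$ at infinity to remove the singularity at $y=0$, apply Serrin's theorem, and then invert back to identify $u$ with $U$ when $\ol{D}=B$. The only cosmetic difference is that you compute the boundary gradient by invoking that $D\psi(y)$ is $|y|^{-2}$ times an orthogonal reflection, whereas the paper carries out the same computation explicitly by decomposing $\nabla_x u$ into components parallel and perpendicular to $x/|x|$.
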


\begin{proof}
First we treat $N \ge 3$ and assume that $u \in C^2(\ol{\Omega})$ is a solution of \eqref{OD_E}.
Define a function $w: \Omega^* \setminus \{ 0 \} \to \re$ such that
\begin{equation}
\label{Kelvin}
	w(y) = |y|^{2-N} u\( \frac{y}{|y|^2}\), \quad (y \in \Omega^* \setminus \{ 0 \}),
\end{equation}
where
\[
	\Omega^* = \{ y = \frac{x}{|x|^2} \ : \ x \in \Omega \} \cup \{ 0 \},
\]
that is, $w$ is the Kelvin transform of $u$.
Note that $x = \frac{y}{|y|^2} \in \Omega$ is equivalent to $y = \frac{x}{|x|^2} \in \Omega^* \setminus \{ 0 \}$.
By a well-known formula for the Kelvin transform and the equation in \eqref{OD_E}, we have
\begin{align*}
	\Delta_y w(y) = |y|^{-2-N} (\Delta_x u) \(\frac{y}{|y|^2}\) = |x|^{2+N} \Delta_x u(x) = N
\end{align*}
for $y \in \Omega^* \setminus \{ 0 \}$.
Since $y \in \pd \Omega^*$ is equivalent to $x \in \pd \Omega$,
$w(y) \Big|_{y \in \pd \Omega^*} = u(x) \Big|_{x \in \pd \Omega} = 0$.
Furthermore, a direct computation gives that
\begin{align*}
	\nabla_y w(y) = |y|^{-N} \left\{ (2-N) u(x) y - 2 (\nabla_x u(x) \cdot x) y + \nabla_x u(x) \right\}
\end{align*}
where $x = \frac{y}{|y|^2}$, $y \in \Omega^* \setminus \{ 0 \}$.
Thus
\begin{align*}
	\nabla_y w(y) \Big|_{y \in \pd\Omega^*} &= \left[ |x|^N \left\{ (2-N) \underbrace{u(x)}_{=0} \frac{x}{|x|^2} - 2 (\nabla_x u(x) \cdot x) \frac{x}{|x|^2} + \nabla_x u(x) \right\} \right]_{x \in \pd\Omega} \\
	&= |x|^N \( - 2 (\nabla_x u(x) \cdot \frac{x}{|x|}) \frac{x}{|x|} + \nabla_x u(x) \)_{x \in \pd\Omega} \\
	& = |x|^N \( \vec{a}(x) - \vec{b}(x) \),
\end{align*}
where
\[
	\vec{a}(x) = \nabla_x u(x) - (\nabla_x u(x) \cdot \frac{x}{|x|}) \frac{x}{|x|}, 
	\quad \vec{b}(x) = (\nabla_x u(x) \cdot \frac{x}{|x|}) \frac{x}{|x|}.
\] 
Note that 
\[
	\vec{a}(x) + \vec{b}(x) = \nabla_x u(x),  \quad \vec{a}(x) \perp \vec{b}(x) \quad \text{for} \ x \in \pd\Omega, 
\]
thus $|\vec{a}(x)|^2 + |\vec{b}(x)|^2 = |\nabla_x u(x)|^2$. 
Therefore, we have
\begin{align*}
	|\nabla_y w(y)|^2 \Big|_{y \in \pd\Omega^*} 
= |x|^{2N} |\nabla_x u(x)|^2 \Big|_{x \in \pd\Omega}.
\end{align*}
Since $u$ is a solution of \eqref{OD_E}, we see that $w$ in \eqref{Kelvin} satisfies
\begin{equation}
\label{OD_Serrin}
	\begin{cases}
	&\Delta w = N \ \text{in} \ \Omega^* \setminus \{ 0 \}, \\
	&w = 0 \quad \text{on} \ \pd \Omega^*, \\
	&|\nabla w| = c > 0 \quad \text{on} \ \pd \Omega^*.
	\end{cases}
\end{equation}
Furthermore, since $u$ satisfies $\lim_{|x| \to \infty} u(x) = 0$, its Kelven transform $w$ satisfies
\[
	\lim_{|y| \to 0} \frac{|w(y)|}{|y|^{2-N}} = 0.
\]
Thus $y = 0$ is a removable singularity and $w$ satisfies the equation on the whole $\Omega^*$. 
Then Serrin's result assures that $\Omega^*$ must be a ball and $w$ must be radially symmetric with respect to the center of the ball.
This implies that $\Omega$ must be the compliment of the ball and $u$ is radially symmetric with respect to the same point.
If $\Omega = B^c$, then $\Omega^* = {\rm int }(B)$ and
\[
	w(y) = \frac{|y|^2-1}{2} \quad (y \in {\rm int }(B))
\]
is the unique solution of \eqref{OD_Serrin} with $c = 1$.
Then by \eqref{Kelvin}, we have
\[
	u(x) = |y|^{N-2} w(y) = \frac{|y|^N - |y|^{N-2}}{2} = \frac{|x|^{-N} - |x|^{2-N}}{2}.
\]

When $N = 2$, the same computation also holds for the Kelvin transform $w(y) = u(\frac{y}{|y|^2})$.
Moreover, we can claim that $y=0$ is again removable since in this case  
\[
	\lim_{|y| \to 0} \frac{|w(y)|}{\log (1/|y|)} = 0
\]
follows 
from the weaker condition at $\infty$: $u(x) = O(1)$ as $|x| \to \infty$.
We have finished the proof of Proposition \ref{Prop:E}.
\end{proof}

\vspace{1em}\noindent
{\it Proof of Theorem \ref{Theorem:E}.} 
Once the uniqueness of the explicit solution $U$ in \eqref{U} to the problem \eqref{OD_E} is proven in Proposition \ref{Prop:E},
the proof of Theorem \ref{Theorem:E} can be done in exactly the same way as the proof of Theorem \ref{Theorem:H}.
\qed

\vskip 0.5cm

\noindent\textbf{Acknowledgement.} 
The second author (F.T.) was supported by JSPS Grant-in-Aid for Scientific Research (B), No. 23K25781, 
and was partly supported by Osaka Central University Advanced Mathematical Institute (MEXT Joint Usage/Research Center on Mathematics and Theoretical Physics).

\end{document}